\documentclass[article,12pt]{amsart}
\usepackage{lineno,hyperref,enumitem}
\usepackage{amsmath,amssymb,dsfont}


\newtheorem{thm}{Theorem}[section]
\newtheorem{defn}[thm]{Definition}
\newtheorem{prop}[thm]{Proposition}

\newtheorem{rem}[thm]{Remark}

\newtheorem{ex}[thm]{Example}











\begin{document}

\title[Study of weaving frames in Krein spaces]{Study of weaving frames in Krein spaces}



\author[A. Bhardwaj]{Avinash Bhardwaj}

\author[A. Bhandari]{Animesh Bhandari$^*$}
\address{Department of Mathematics\\ SRM University, {\it AP} - Andhra Pradesh\\ Neeru Konda, Amaravati, Andhra Pradesh - 522502\\ India}

\email{bhandari.animesh@gmail.com, animesh.b@srmap.edu.in}
\subjclass[2010]{42C15, 46E22, 46C07, 47A05}

\keywords{frame, weaving frames, fundamental symmetry, Krein space\\ $*$ is the corresponding author}

\begin{abstract}
Inspired by the work of Bemrose et al. \cite{Be16}, we delve into the study of weaving frames in Krein spaces. This paper presents a comprehensive exploration of various properties and characterizations of Krein space weaving frames. In support of our findings, several examples and counter examples are provided, illustrating the applicability of the theoretical results. Additionally, we extend the discussion to an important application in probabilistic erasure, highlighting how weaving frames can be used to mitigate data loss in such scenarios. This work contributes to the broader understanding of frame theory in indefinite inner product spaces.

\end{abstract}

\maketitle



\section{Introduction}
The concept of frames, first introduced by Duffin et al. \cite{duffin1952class}, gained widespread attention when Daubechies et al. \cite{2} underscored its importance in signal processing. This renewed interest led to frame theory becoming a crucial tool in various fields, including Mathematics, Physics, and Engineering. Its versatility has since made frames indispensable in different areas, such as, wavelet analysis, quantum mechanics, and communication systems.

We first discuss the basics of frames to establish the foundational concepts required to our results.

\begin{defn}
A sequence  $\{ f_n \}_{n=1}^\infty$ in a Hilbert space $\mathcal{H}$ is called a \emph{frame} for $\mathcal H$ if there exist constants $A,B >0$ so that 
\begin{equation}\label{Eq:Frame} A\|f\|^2~ \leq ~\sum\limits_{n=1}^\infty |\langle f,f_n\rangle|^2 ~\leq ~B\|f\|^2,
\end{equation} for all $f \in \mathcal{H}$. The numbers $A, B$ are called \emph{frame bounds}. For a given  frame $\{f_n\}_{n=1}^\infty$ for $\mathcal{H}$, the \emph{pre-frame operator} or \emph{synthesis operator} is a bounded linear operator $T: \ell^2(\mathbb N)\rightarrow \mathcal{H}$ and is defined as $T\{c_n\}_{n=1}^\infty = \sum\limits_{n=1}^\infty c_n f_n$. The adjoint of $T$, $T^*: \mathcal{H} \rightarrow \ell^2(\mathbb N)$ is  given by $T^*f = \{\langle f, f_n\rangle\}_{n=1}^\infty$, is called the \emph{analysis operator}. The \emph{frame operator}, $S=TT^*: \mathcal{H}\rightarrow \mathcal{H}$, is defined as 
\begin{equation}
Sf=TT^*f = \sum\limits_{n=1}^\infty \langle f, f_n\rangle f_n~~\emph{for~all~} f\in {\mathcal H}.
\end{equation}
We notice that depending upon the nature of frame bounds we get special kinds of frames, e.g. in the inequality (\ref{Eq:Frame}) if $A=B$ then the given frame is called a tight frame, in addition to the same if $A=B=1$ it is called a Parseval frame. It is well-known that the frame operator is bounded, positive, self adjoint and invertible.

Using the invertibility property, every element in the Hilbert space can be reconstructed as,
\begin{equation}
f=\sum\limits_{n=1}^\infty \langle f, S^{-1}f_n \rangle f_n = \sum\limits_{n=1}^\infty \langle f, f_n \rangle S^{-1}f_n.
\end{equation}

Furthermore, $T^*T\{c_n\}=\sum\limits_{m=1}^\infty \langle f_n, f_m\rangle c_n$ for every $\{c_n\} \in \ell^2(\mathbb N)$ and its matrix representation is $(T^*T)_{m,n} = \langle \delta_m, T^*T \delta_n \rangle_{l^2}$. Thus we have,
\begin{eqnarray*}
(T^*T)_{m,n} = \langle \delta_m, T^*T \delta_n \rangle_{l^2} &=& \langle T \delta_m, T \delta_n \rangle_{\mathcal H}
\\&=& \langle f_m, f_n \rangle_{\mathcal H}
\\&=& G_{m, n}
\end{eqnarray*}
which is known as Gramian matrix. It is to be noted that the Gramian of a frame is invertible if and only if the given frame is a Riesz basis.

For detailed discussion regarding frames, their generalization and applications we refer \cite{christensen2003introduction, 1, Kaf09, Gu18, 3, Po20, Ba18, Fi97, Fi98, Fi06, Fi00, Na12, Na13, Bo06, Cz02, Cz06, Ha07, Ha00, He04, An79}.
\end{defn}

If $(\mathcal K, [.,.])$ is a Krein space with fundamental decomposition $\mathcal K_{\delta^+} \oplus \mathcal K_{\delta^-}$ and  fundamental  symmetry  $J = P_{\delta^+} - P_{\delta^-}$, where $P_{\delta^{\pm}}$ is the orthogonal  projection onto $\mathcal{K}_{\delta^{\pm}}$, then for every $k \in \mathcal K$ we have, $$J(k) = J(k_{\delta^+}+ k_{\delta^-}) = k_{\delta^+} - k_{\delta^-}, \quad k = k_{\delta^+}+ k_{\delta^-}\in \mathcal K_{\delta^+} \oplus \mathcal K_{\delta^-}. $$ In this context the associated $J$-inner product is given by,
$$[k, k']_J = [k, Jk'],  \quad k, k' \in \mathcal{K},$$ and hence ($\mathcal{K}, [., .]_J$) is a Hilbert space. The inner product $[., .],$ which is positive definite, induces a topology on Krein space $\mathcal{K}$ through $J$-norm define as, $$\|k\|_{J}^2 = [k, k]_J = [k, Jk], \quad \forall k \in \mathcal{K}.$$
Here ($\mathcal{K}_{\delta^+}, +[., .]$) is a Hilbert space and ($\mathcal{K}_{\delta^-}, -[., .]$)  is known as anti Hilbert space.

A $J$-orthonormalized system is a family of vectors $\{e_n\}_{n=1}^\infty$ satisfying condition $[e_m, e_n]=\delta_{mn}$ for every $m, n \in \mathbb N$. A $J$-orthonormal basis is a $J$-orthonormalized system that also serves as a Schauder basis, providing unique and stable expansions for vectors. If $\{e_n\}_{n=1}^\infty$ is a $J$-orthonormal basis in $\mathcal K$, then any element $k \in \mathcal K$ can be represented as $k=\sum\limits_{n=1}^\infty \lambda_n [k, e_n] e_n$, where $\lambda_n=[e_n, e_n]=\pm 1$.

A sequence $\{z_n\}_{n=1}^\infty$ in a Krein space $(\mathcal K, [.,.])$ is said to be a frame for $\mathcal K$, if for every $k \in \mathcal K$ there exist positive constants $\alpha, \beta$ we have,
\begin{equation}
\alpha \|k\|_{J}^2 \leq \sum\limits_{n=1}^\infty |[k, z_n]|^2 \leq \beta \|k\|_{J}^2,
\end{equation}
where $\|\|_{J}$ is the standard norm in $(\mathcal K, \langle .,. \rangle)$. It is to be noted that a sequence $\{z_n\}_{n=1}^\infty$ is a frame for a Krein space $(\mathcal K, [.,.])$ if and only if it is a frame for $(\mathcal K, \langle .,. \rangle)$ (see \cite{Ga}). The key difference between $J$-frames and classical frames for Hilbert spaces is that some frames in Hilbert space $(\mathcal K, \langle .,. \rangle)$  are not necessarily $J$-frames in Krein spaces $(\mathcal K, [.,.])$. For detailed discussion regarding the same we refer (Example 3.3, \cite{Gi12}).\\





\section{Main Results}

In this section, we explore various properties and characterizations of weaving frames within the structure of Krein spaces. We begin by introducing the concept of Krein space weaving frames, which extends the classical frame theory into indefinite inner product spaces. Our discussion focuses on  the interplay between the positive and negative components of Krein spaces.

\begin{defn}
Let $\{z_n\}_{n=1}^\infty$ and $\{z'_n\}_{n=1}^\infty$ be two frames for the Krein space $(\mathcal K, [.,.])$, then they are said to be weaving frames for $\mathcal K$ if for for every $\sigma \subset \mathbb N$, $\{z_n\}_{n \in \sigma} \cup \{z'_n\}_{n \in \sigma^c}$ is a frame for $\mathcal K$. Thus for every $k \in \mathcal K$ and every $\sigma \subset \mathbb N$, there exist positive constants $\alpha \leq \beta$ we have,
\begin{equation}\label{1}
\alpha \|k\|_{J}^2 \leq \sum\limits_{n \in \sigma} |[k, z_n]|^2 + \sum\limits_{n \in \sigma^c} |[k, z'_n]|^2 \leq \beta \|k\|_{J}^2.
\end{equation}
\end{defn}

We now define the corresponding  frame operator for the associated weaving frames, establishing its roles in the framework.

The weaving frame operator $S:\mathcal K \rightarrow \mathcal K$ is defined as, 
$$S_{\sigma}(k)=TJT^*=\sum\limits_{n \in \sigma} [k, z_n] z_n + \sum\limits_{n \in \sigma^c} [k, z'_n] z'_n,$$ where $T, T^*$ and $J$ are associated synthesis, analysis operators and fundamental symmetry on $(\mathcal K, [.,.])$.

\begin{ex}
let us assume a Krein space $(\mathcal K = \mathbb C^2, [.,.])$ with the associated indefinite inner product defined as $[(k_1 , k_2), (l_1,  l_2)] = k_1\overline l_1 - k_2\overline l_2$, where $\ (k_1, k_2)  \in {\mathcal K}$, $\ (l_1, l_2) \in {\mathcal K}$. Consider two frames 
$$
\{z'_n\}_{n=1}^3\ = \{(1, 1), (0, 1), (1, 0) \}$$ and $$\{z''_n\}_{n= 1}^3\ = \{(-1, -1), (0, -1), (1, 0) \}.
$$ Then it is easy to verify that for every $\sigma \subset \{1, 2, 3\}$, the equation~\eqref{1} is satisfied. Therefore $\{z'_n\}_{n = 1}^3$ and $\{z''_n\}_{n = 1}^3$ are weaving frames for $(\mathcal K = \mathbb C^2, [.,.])$.

On the other hand , if we consider a Krein space  $(\mathcal K = \mathbb C^3, [.,.])$. with the associated indefinite inner product $[. , .]$ defined as 
$$
[ (k_1, k_2, k_3 ), (l_1, l_2, l_3 )] = k_1\overline l_1 + k_2\overline l_2 - k_3\overline l_3,
$$ where $\ (k_1, k_2, k_3) \in {\mathcal K}$, $\ (l_1, l_2, l_3) \in {\mathcal K}$. Let us consider two frames 
$$
\{z'_n\}_{n=1}^3\ = \{(1, 2, 0), (0, \sqrt{2}, 0), (0, 0, 1) \}
$$ and 
$$
\{z''_n\}_{n= 1}^3\ = \{(0, -\sqrt{2}, 0), (-2, 0, 0), (0, 0, 2) \}.
$$
 It is easy to verify that these frames are not weaving frames if we consider $\sigma$ = $\{2\}$, since for $k =(k_1, 0, 0)$ with $k_1 \neq 0$ we have  $\|k\|_{J}^2 =  \|(k_1, 0,0)\|_{J}^2 = |k_1|^2$, where as 
\begin{eqnarray*}
&&\sum\limits_{n \in \sigma} |[k, z'_n]|^2 + \sum\limits_{n \in \sigma^c} |[k, z''_n]|^2 \\&=& |[(k_1, 0, 0), (0, \sqrt{2}, 0)]|^2
+|[(k_1, 0, 0), (0, -\sqrt{2}, 0)]|^2
 + |[(k_1, 0, 0), (0, 0, 2)]|^2 
\\&=& 0.
\end{eqnarray*}
 Thus the left hand inequality of equation (\ref{1}) is not satisfied.
Therefore, the given frames are not weaving frames.

\end{ex}We present both an example and a non-example of Krein space weaving frames to illustrate their defining characteristics and limitations.

Our first result provides a characterization of weaving frames in Krein spaces through the use of the associated fundamental symmetry.

\begin{thm}\label{char}
Let $\{z_n\}_{n=1}^\infty$ and $\{z'_n\}_{n=1}^\infty$ be two frames for the Krein space $(\mathcal K, [.,.])$, then the following are equivalent:

\begin{enumerate}
\item $\{z_n\}_{n=1}^\infty$ and $\{z'_n\}_{n=1}^\infty$ are weaving frames for $\mathcal K$.
\item $\{Jz_n\}_{n=1}^\infty$ and $\{Jz'_n\}_{n=1}^\infty$ are weaving frames for $\mathcal K$.
\item $\{z_n\}_{n=1}^\infty$ and $\{z'_n\}_{n=1}^\infty$ are weaving frames for $(\mathcal K, [.,.]_J)$.
\item $\{Jz_n\}_{n=1}^\infty$ and $\{Jz'_n\}_{n=1}^\infty$ are weaving frames for $(\mathcal K, [.,.]_J)$.
\end{enumerate}
\end{thm}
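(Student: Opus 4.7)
The plan is to translate everything through the dictionary between the indefinite form $[.,.]$ and the associated $J$-inner product $[.,.]_J$. From the definition $[k,k']_J = [k, Jk']$ and the identity $J^2 = I$, I would first record the two slot-swapping identities
\begin{equation*}
[k, w] = [k, Jw]_J \quad\text{and}\quad [k, Jw] = [k, w]_J \qquad (k, w \in \mathcal K),
\end{equation*}
together with two structural facts that follow from the fundamental decomposition: $J$ is an isometry of both forms (so in particular $\|Jk\|_J = \|k\|_J$), and $J$ is self-adjoint on the Hilbert space $(\mathcal K, [.,.]_J)$. Combining the last two facts with the slot-swapping identities yields the transfer rule $[Jk, z_n] = [k, Jz_n]$, which is the workhorse identity used below.

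Two of the four equivalences are then immediate and will be disposed of first. For (1) $\Leftrightarrow$ (4), substituting $[k, Jz_n]_J = [k, z_n]$ (and the analogue for $z'_n$) termwise into the weaving inequality that defines (4) reproduces the inequality that defines (1), with the same constants $\alpha, \beta$. For (2) $\Leftrightarrow$ (3) the identity $[k, z_n]_J = [k, Jz_n]$ plays the same role. No new argument is needed for either pair; in each case the same $\sigma$ and the same bounds work.

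To link the two pairs I would prove (1) $\Leftrightarrow$ (2) by substituting $k \mapsto Jk$ in the defining inequality of (1). The outer bound $\alpha\|k\|_J^2 \leq \,\cdot\, \leq \beta\|k\|_J^2$ is preserved because $\|Jk\|_J = \|k\|_J$, and each summand transforms via $|[Jk, z_n]|^2 = |[k, Jz_n]|^2$, yielding precisely the inequality that defines (2) with the same bounds. Applying the substitution once more, using $J^2 = I$, reverses the direction. Chaining (4) $\Leftrightarrow$ (1) $\Leftrightarrow$ (2) $\Leftrightarrow$ (3) then gives the full four-way equivalence.

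The only real obstacle is bookkeeping: two inner products are active simultaneously and a factor of $J$ may sit in either slot, so one must consistently track which form is being used and on which argument. Once the four identities of the opening paragraph are verified from the fundamental decomposition $J = P_{\delta^+} - P_{\delta^-}$, every remaining step reduces to a direct substitution that leaves both the index set $\sigma$ and the frame bounds unchanged.
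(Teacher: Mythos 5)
Your proof is correct and follows essentially the same route as the paper's: both arguments rest on the termwise identities $[k,Jz_n]_J=[k,z_n]$ and $[k,z_n]_J=[k,Jz_n]$ for the two ``immediate'' equivalences, plus a single substitution $k\mapsto Jk$ (justified by $J^2=\mathcal I$, $J=J^*$ and $\|Jk\|_J=\|k\|_J$) to link the two pairs. The only cosmetic difference is that you place the substitution step at (1)$\Leftrightarrow$(2) while the paper places it at (3)$\Leftrightarrow$(4).
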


The validity of the above theorem is demonstrated through the following example.

\begin{ex}
Consider $\mathcal K = \ell^2$ and the indefinite inner product on $\ell^2$ is defined as
$$
[k,k']=\sum\limits_{n=1}^\infty (-1)^n z_n \overline{z'_n},
$$ where $k=\{z_n\}_{n=1}^\infty, k'=\{z'_n\}_{n=1}^\infty \in \ell^2$. Suppose that $K_{\delta^+} = \{\{z_n\}_{n=1}^\infty: z_n=0 \text{~if ~n ~is~ odd}\}$ and $K_{\delta^-} = \{\{z'_n\}_{n=1}^\infty: z_n=0 \text{~if ~n ~is~ even}\}$. Then $\mathcal K = K_{\delta^+} \oplus  K_{\delta^-}$, where $K_{\delta^+}$ and $K_{\delta^-}$ are complete and hence $\mathcal K = \ell^2, [.,.]$ is a Krein space. 

Consider the frames 
$$
\{z_n\}_{n=1}^\infty=\{e_1, 0, e_2, e_2, e_3, e_3, \cdots\}
$$ and 
$$
\{z'_n\}_{n=1}^\infty=\{e_1, e_1, e_2, e_2, e_3, e_3, \cdots\}
$$ for $\ell^2$, where $\{e_n\}$ is the standard orthonormal basis in $\ell^2$. Then it is easy to see that they are weaving frames for $\ell^2$. If 
$$
J=\begin{pmatrix}
-1 & 0 & 0 & 0 & 0 & \cdots\\
0 & 1 & 0 & 0 & 0 & \cdots\\
0 & 0 & -1 & 0 & 0 & \cdots\\
0 & 0 & 0 & 1 & 0 & \cdots\\
 \vdots & \vdots & \vdots & \vdots & \vdots & \ddots
\end{pmatrix},
$$

Then $\{Jz_n\}_{n=1}^\infty = \{-e_1, 0, e_2, e_2, -e_3, -e_3, \cdots\}$ and \\
$\{Jz'_n\}_{n=1}^\infty = \{-e_1, -e_1, e_2, e_2, -e_3, -e_3, \cdots\}$, and therefore it is easy to verify that the remaining conditions of the theorem are satisfied.
\end{ex}

\noindent \underline{\bf Proof of Theorem 2.3:}\\
\noindent  (\underline{1 $\Longleftrightarrow$ 4}) Let $\{z_n\}_{n=1}^\infty$ and $\{z'_n\}_{n=1}^\infty$ be weaving frames for $\mathcal K$ with the universal bounds $\alpha, \beta$. Then for every $k \in \mathcal K$ and every $\sigma \subset \mathbb N$ we have, 
\begin{equation}\label{5}
	\alpha \|k\|_{J}^2 \leq \sum\limits_{n \in \sigma} |[k, z_n]|^2 + \sum\limits_{n \in \sigma^c} |[k, z'_n]|^2 \leq \beta \|k\|_{J}^2.
\end{equation}
Now we prove $J^2=\mathcal I$ and $J=J^*$, where $\mathcal I$ is the identity operator and $J$ is the fundamental symmetry on $\mathcal K$. 

Since $\mathcal K = K_{\delta^+} \oplus  K_{\delta^-}$, then for every $k \in \mathcal K$ can be written as $k=k_{\delta^+} + k_{\delta^-}$ and $J = \pi^+ - \pi^-$, where $ \pi^+$ is the orthogonal projection onto $K_{\delta^+}$ and $ \pi^-$ is the orthogonal projection onto $K_{\delta^-}$. 

Thus for every $k \in \mathcal K$ we have,
\begin{eqnarray*}
	J^2(k)=J(Jk)=J(J(k_{\delta^+} + k_{\delta^-}))&=&J( (\pi^+ - \pi^-)(k_{\delta^+} + k_{\delta^-}))\\
	&=&J(k_{\delta^+} - k_{\delta^-})\\
	&=&  (\pi^+ - \pi^-)(k_{\delta^+} - k_{\delta^-})\\
	&=& k_{\delta^+} + k_{\delta^-})\\
	&=& k.
\end{eqnarray*}
Furthermore, for every $k \in \mathcal K$ we have,
\begin{eqnarray*}
	[J^*(k), k] = [k, J(k)]=[k, (\pi^+ - \pi^-)(k)] &=& [(\pi^+ - \pi^-)^*(k), k]\\
	&=& [(\pi^+ - \pi^-)(k), k]\\
	&=& [J(k), k].
\end{eqnarray*}
Therefore, applying equation (\ref{5}), for every $k \in \mathcal K$ and every $\sigma \subset N$ we have,
\begin{equation*}
	\alpha \|k\|_{J}^2 \leq \sum\limits_{n \in \sigma} |[Jk, Jz_n]|^2 + \sum\limits_{n \in \sigma^c} |[Jk, Jz'_n]|^2 \leq \beta \|k\|_{J}^2.
\end{equation*}
and hence we obtain,
\begin{equation*}
	\alpha \|k\|_{J}^2 \leq \sum\limits_{n \in \sigma} |[k, Jz_n]_{J}|^2 + \sum\limits_{n \in \sigma^c} |[k, Jz'_n]_{J}|^2 \leq \beta \|k\|_{J}^2.
\end{equation*}

\noindent  (\underline{2 $\Longleftrightarrow$ 3}) Let $\{J z_n\}_{n=1}^\infty$ and $\{J z'_n\}_{n=1}^\infty$ be weaving frames for $\mathcal K$ with the universal bounds $\alpha, \beta$. Then for every $k \in \mathcal K$ and every $\sigma \subset \mathbb N$ we have, 
\begin{eqnarray*}
	&& \alpha \|k\|_{J}^2 \leq \sum\limits_{n \in \sigma} |[k, J z_n]|^2 + \sum\limits_{n \in \sigma^c} |[k, J z'_n]|^2 \leq \beta \|k\|_{J}^2.\\
	& \Leftrightarrow & \alpha \|k\|_{J}^2 \leq \sum\limits_{n \in \sigma} |[k, z_n]_{J}|^2 + \sum\limits_{n \in \sigma^c} |[k, z'_n]_{J}|^2 \leq \beta \|k\|_{J}^2.
\end{eqnarray*}
Thus $\{z_n\}_{n=1}^\infty$ and $\{z'_n\}_{n=1}^\infty$ are weaving frames for $(\mathcal K, [.,.]_J)$ and conversely.

\noindent  (\underline{3 $\Longleftrightarrow$ 4}) Let $\{z_n\}_{n=1}^\infty$ and $\{z'_n\}_{n=1}^\infty$ are weaving frames for $(\mathcal K, [.,.]_J)$ with the universal bounds $\alpha, \beta$. 

Then for every $k \in \mathcal K$ and every $\sigma \subset \mathbb N$ we have, 
\begin{equation}\label{6}
	\alpha \|k\|_{J}^2 \leq \sum\limits_{n \in \sigma} |[k, z_n]_{J}|^2 + \sum\limits_{n \in \sigma^c} |[k, z'_n]_{J}|^2 \leq \beta \|k\|_{J}^2.
\end{equation}
For every $k \in \mathcal K$, we prove $\|J(k)\|_{J}=\|k\|_{J}$. Since $\|J\|_{J}=1$, then $\|J(k)\|_{J} \leq \|k\|_{J}$. 

Furthermore, for every $k \in \mathcal K$, $\|k\|_{J} \leq \|J^{-1}\|_{J} \|J(k)\|_{J}$, and hence we have $\|k\|_{J} \leq \|J(k)\|_{J}$.  Thus for every $k \in \mathcal K$, $\|J(k)\|_{J}=\|k\|_{J}$.

Therefore, using equation (\ref{6}) and replacing $k$ by $J(k)$, for every every $\sigma \subset \mathbb N$ we obtain,
\begin{eqnarray*}
	&& \alpha \|J(k)\|_{J}^2 \leq \sum\limits_{n \in \sigma} |[J(k), z_n]_{J}|^2 + \sum\limits_{n \in \sigma^c} |[J(k), z'_n]_{J}|^2 \leq \beta \|J(k)\|_{J}^2\\
	& \Leftrightarrow & \alpha \|k\|_{J}^2 \leq \sum\limits_{n \in \sigma} |[k, J z_n]_{J}|^2 + \sum\limits_{n \in \sigma^c} |[k, Jz'_n]_{J}|^2 \leq \beta \|k\|_{J}^2.
\end{eqnarray*}
This completes the proof.\\

In the following result, we present necessary and sufficient conditions for the existence of Krein space weaving frames. 

\begin{prop}
Let $\{z_n\}_{n=1}^\infty$ and $\{z'_n\}_{n=1}^\infty$ be frames for the Krein space $(\mathcal K, [.,.])$ with the bounds $\alpha_1 \leq \beta_1$ and $\alpha_2 \leq \beta_2$, then the following are equivalent:
\begin{enumerate}
\item $\{z_n\}_{n=1}^\infty$ and $\{z'_n\}_{n=1}^\infty$ are weaving frames for $\mathcal K$.

\item For every $\sigma \subset \mathbb N$, if $S_{\sigma}$ is the corresponding weaving frame operator, then for every $k \in \mathcal K$ there exists $\alpha >0$ so that we have,
$$\|S_{\sigma}(k)\|_{J} \geq \alpha \|k\|_{J}.$$
\end{enumerate}
\end{prop}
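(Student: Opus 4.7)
The plan is to reduce the equivalence to a standard positivity/spectral argument on the Hilbert space $(\mathcal K, [\cdot,\cdot]_J)$. The starting observation is that a direct expansion gives
\[
[S_\sigma k, k] \;=\; \sum_{n\in\sigma}|[k,z_n]|^2 \;+\; \sum_{n\in\sigma^c}|[k,z'_n]|^2,
\]
which is exactly the middle term in the weaving inequality~\eqref{1}; and since $J^2=\mathcal I$ we may rewrite it in the Hilbert inner product as $[S_\sigma k, k]=[S_\sigma k, Jk]_J$. I would also invoke, as established in the proof of Theorem~\ref{char}, that $J$ is a $[\cdot,\cdot]_J$-isometry, so that $\|Jk\|_J=\|k\|_J$.

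For the direction (1) $\Rightarrow$ (2) I would simply combine the weaving lower bound with Cauchy--Schwarz on $(\mathcal K, [\cdot,\cdot]_J)$:
\[
\alpha\|k\|_J^2 \;\leq\; [S_\sigma k, k] \;=\; [S_\sigma k, Jk]_J \;\leq\; \|S_\sigma k\|_J\,\|Jk\|_J \;=\; \|S_\sigma k\|_J\,\|k\|_J,
\]
from which the required inequality $\|S_\sigma k\|_J \geq \alpha\|k\|_J$ follows upon dividing by $\|k\|_J$.

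For the direction (2) $\Rightarrow$ (1) the upper weaving bound is immediate from the individual frame bounds $\beta_1,\beta_2$, so the real task is to extract a lower bound on $[S_\sigma k,k]$ from the single-sided hypothesis $\|S_\sigma k\|_J \geq \alpha\|k\|_J$. The decisive step is to verify that $JS_\sigma$ is a \emph{positive self-adjoint} operator on the Hilbert space $(\mathcal K, [\cdot,\cdot]_J)$. Self-adjointness follows from the identity $[S_\sigma k, k']=[k, S_\sigma k']$ (immediate from the definition of $S_\sigma$) combined with the $[\cdot,\cdot]$-self-adjointness of $J$, giving $[JS_\sigma k, k']_J=[k, JS_\sigma k']_J$; positivity is the identity $[JS_\sigma k, k]_J=[S_\sigma k, k]\geq 0$. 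Since $J$ is $[\cdot,\cdot]_J$-isometric we have $\|JS_\sigma k\|_J=\|S_\sigma k\|_J\geq \alpha\|k\|_J$, and for any positive self-adjoint operator $A$ on a Hilbert space the bound $\|Ak\|\geq \alpha\|k\|$ forces $A^2\geq \alpha^2\mathcal I$ and hence $A\geq \alpha\mathcal I$ by functional calculus. Applying this with $A=JS_\sigma$ yields $[S_\sigma k, k]=[JS_\sigma k, k]_J\geq \alpha\|k\|_J^2$, which is the missing lower weaving bound.

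The main obstacle I anticipate is the step in (2) $\Rightarrow$ (1): a Cauchy--Schwarz argument only produces an upper bound on $[S_\sigma k, k]$ in terms of $\|S_\sigma k\|_J$, never a lower one, so one must appeal to the spectral principle that a positive self-adjoint operator which is bounded below in norm is bounded below as a quadratic form. Selecting the correct auxiliary operator ($JS_\sigma$ rather than $S_\sigma$ itself, which fails to be self-adjoint with respect to $[\cdot,\cdot]_J$) is the crucial technical point at which the Krein-space structure enters.
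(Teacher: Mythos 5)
Your argument is correct, and the forward direction (1) $\Rightarrow$ (2) is essentially the paper's own proof: the paper realizes $\|S_\sigma k\|_J$ as $\sup_{\|l\|_J=1}|[S_\sigma k,l]_J|$ and tests against $l=k/\|k\|_J$, which is exactly your Cauchy--Schwarz step. The converse, however, is where you genuinely diverge. The paper never touches the spectral theorem: it factors $S_\sigma=TJT^*$, estimates $\alpha^2\|k\|_J^2\leq\|S_\sigma k\|_J^2\leq\|T\|_J^2\|T^*k\|_J^2$ together with the Bessel bound $\|T\|_J^2\leq\beta_1+\beta_2$, and reads off the lower weaving bound $\frac{\alpha^2}{\beta_1+\beta_2}\|k\|_J^2\leq\|T^*k\|_J^2=\sum_{n\in\sigma}|[k,z_n]|^2+\sum_{n\in\sigma^c}|[k,z'_n]|^2$ directly from the norm of the analysis operator. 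Your route instead identifies $JS_\sigma$ as a positive self-adjoint operator on $(\mathcal K,[\cdot,\cdot]_J)$ and invokes the spectral principle that $\|Ak\|\geq\alpha\|k\|$ with $A\geq0$ forces $A\geq\alpha\mathcal I$; your verification of $[\cdot,\cdot]_J$-self-adjointness and positivity of $JS_\sigma$ is sound, and this is precisely the point where the Krein structure must be handled, as you note. The trade-off is clear: the paper's argument is elementary (only operator-norm estimates) but yields the weaker constant $\alpha^2/(\beta_1+\beta_2)$, whereas yours needs operator monotonicity of the square root (or the spectral mapping theorem) but recovers the sharp lower bound $\alpha\|k\|_J^2\leq[S_\sigma k,k]$, i.e.\ the optimal weaving constant. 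Both correctly observe that the upper weaving bound is automatic from $\beta_1+\beta_2$.
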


\begin{proof}  \noindent  (\underline{1 $\implies$ 2})
	
Let $\{z_n\}_{n=1}^\infty$ and $\{z'_n\}_{n=1}^\infty$ be weaving frames for $(\mathcal K, [.,.])$ with the universal bounds $\alpha, \beta$. Then for every $k \in \mathcal K$ and every $\sigma \subset \mathbb N$ we have,
\begin{equation*}
\alpha \|k\|_{J}^2 \leq \sum\limits_{n \in \sigma} |[k, z_n]|^2 + \sum\limits_{n \in \sigma^c} |[k, z'_n]|^2 \leq \beta \|k\|_{J}^2.
\end{equation*}
and hence applying theorem \ref{char} we obtain,
\begin{equation}\label{2}
\alpha \|k\|_{J}^2 \leq \sum\limits_{n \in \sigma} |[k, z_n]_{J}|^2 + \sum\limits_{n \in \sigma^c} |[k, z'_n]_{J}|^2 \leq \beta \|k\|_{J}^2.
\end{equation}
Thus using equation (\ref{2}), for every $k \in \mathcal K$ and every $\sigma \subset \mathbb N$ we obtain,
\begin{eqnarray*}
\|S_{\sigma}k\|_{J} = \sup\limits_{\|l\|=1} |[S_{\sigma}k, l]_{J}| & \geq & \left [ S_{\sigma}k, \frac{k}{\|k\|_{J}}\right ]_{J}\\
& = & \frac{1} {\|k\|_{J}} [S_{\sigma}k, k]_{J}\\
& \geq & \alpha \|k\|_{J}.
\end{eqnarray*}

 \noindent  (\underline{2 $\implies$ 1}) For every $k \in \mathcal K$ and every $\sigma \subset \mathbb N$, the corresponding weaving frame operator is given by, $S_{\sigma} k = TJT^* k$, where $T, T^*$ and $J$ are associated synthesis, analysis operators and fundamental symmetry on $(\mathcal K, [.,.])$. 
 
 Since $\|J\| = 1$ and $\|S_{\sigma}(k)\|_{J} \geq \alpha \|k\|_{J}$, and $\{z_n\}_{n=1}^\infty$ and $\{z'_n\}_{n=1}^\infty$ are frames for the Krein space $(\mathcal K, [.,.])$ with the bounds $\alpha_1 \leq \beta_1$ and $\alpha_2 \leq \beta_2$ then we have,
 \begin{equation}\label{3}
 \alpha^2 \|k\|_{J}^2 \leq \|T\|_{J}^2 \|T^*k\|_{J}^2 \leq \|T\|_{J}^2 (\beta_1 + \beta_2) \|k\|_{J}^2.
\end{equation}  
Again we have,
\begin{equation}\label{4}
\|T\|_{J}^2 = \|T^*\|_{J}^2 = \sup\limits_{k \neq 0} \frac{\|T^*k\|_{J}^2} {\|k\|_{J}^2} \leq \frac { (\beta_1 + \beta_2) \|k\|_{J}^2}{\|k\|_{J}^2} = (\beta_1 + \beta_2).
\end{equation}
Thus using equations (\ref{3}) and (\ref{4}), for every $k \in \mathcal K$ and every $\sigma \subset \mathbb N$ we obtained,
\begin{equation}
\frac{\alpha^2}{(\beta_1 + \beta_2)} \|k\|_{J}^2 \leq \sum\limits_{n \in \sigma} |[k, z_n]_{J}|^2 + \sum\limits_{n \in \sigma^c} |[k, z'_n]_{J}|^2 .
\end{equation}
The upper weaving frame bound will be achieved automatically under the given assumption. Therefore, applying theorem \ref{char}, $\{z_n\}_{n=1}^\infty$ and $\{z'_n\}_{n=1}^\infty$ are weaving frames for $\mathcal K$.
\end{proof}

The validity of the above theorem is illustrated through the following example.

\begin{ex}
Let us consider the frames 
$$
\{z_n\}_{n=1}^\infty=\{e_1, e_2, e_1, e_2, e_3, e_3, \cdots\}
$$ and 
$$
\{z'_n\}_{n=1}^\infty=\{e_1, e_1, e_2, e_2, e_3, e_3, \cdots\}
$$ for $\ell^2=\mathcal K$. Then for every $\sigma \subset \mathbb{N}$, it is straightforward to verify that the necessary and sufficient conditions of the theorem are satisfied.
\end{ex}

The following result establishes the wovenness of a Krein space frame and its image under a bounded linear operator, subject to certain conditions.
 
 \begin{prop} \label{2.8}
Let us consider a frame $\{z_n\}_{n=1}^\infty$  for Krein sapce $(\mathcal K, [.,.])$ with bounds  $\alpha$,  $\beta$ and $U$ be a bounded operator. If  $\|I - U\|_{J}^2 < \frac{\alpha}{\beta}$ then $\{z_n\}_{n=1}^\infty$ and $\{Uz_n\}_{n=1}^\infty$ are weaving frames.
\end{prop}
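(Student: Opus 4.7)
Fix $\sigma \subset \mathbb N$ and $k \in \mathcal K$. My plan is to bound the woven sum
$$\Sigma_\sigma(k) \;:=\; \sum_{n \in \sigma} |[k, z_n]|^2 \;+\; \sum_{n \in \sigma^c} |[k, U z_n]|^2$$
above and below with constants that are independent of $\sigma$. The upper bound is essentially automatic: $\{z_n\}$ is Bessel with bound $\beta$, and the Krein-adjoint identity $[k, Uz_n] = [U^{*}k, z_n]$ combined with $\|U^{*}\|_J = \|U\|_J$ (a consequence of $\|J\|_J = 1$, as noted in the proof of Theorem~\ref{char}) shows that $\{Uz_n\}$ is also Bessel, with bound $\beta\|U\|_J^2$. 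Hence $\Sigma_\sigma(k) \leq \beta(1+\|U\|_J^2)\|k\|_J^2$ uniformly in $\sigma$.

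The real work, and the main obstacle, is the uniform lower bound, since the constant must not depend on $\sigma$ and this is the only place where the hypothesis $\|I-U\|_J^2 < \alpha/\beta$ is spent. For $n \in \sigma^c$ I write $Uz_n = z_n - (I-U)z_n$ and apply the elementary perturbation inequality
$$|a-b|^2 \;\geq\; (1-\epsilon)\,|a|^2 \;-\; (\epsilon^{-1}-1)\,|b|^2, \qquad \epsilon \in (0,1),$$
which is valid for all $a,b \in \mathbb C$ (it is merely the expansion of $|\sqrt{\epsilon}\,a - \epsilon^{-1/2}b|^2 \geq 0$). Taking $a = [k, z_n]$, $b = [k, (I-U)z_n]$, summing over $n \in \sigma^c$, and adding the trivial $\sum_{n \in \sigma}|[k,z_n]|^2 \geq (1-\epsilon)\sum_{n \in \sigma}|[k,z_n]|^2$ collapses the $\sigma$-dependence on the positive side:
\begin{equation*}
\Sigma_\sigma(k) \;\geq\; (1-\epsilon)\sum_{n=1}^{\infty}|[k, z_n]|^2 \;-\; (\epsilon^{-1}-1)\sum_{n \in \sigma^c} |[k, (I-U)z_n]|^2.
\end{equation*}

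Each remaining sum is then controlled $\sigma$-freely: the frame lower bound gives $\sum_{n=1}^\infty |[k,z_n]|^2 \geq \alpha \|k\|_J^2$, while the Bessel estimate of the first step, applied with $I-U$ in place of $U$, gives $\sum_{n=1}^\infty |[k,(I-U)z_n]|^2 \leq \beta \|I-U\|_J^2 \|k\|_J^2$. Combining,
$$\Sigma_\sigma(k) \;\geq\; \frac{1-\epsilon}{\epsilon}\,\bigl[\alpha\epsilon - \beta\|I-U\|_J^2\bigr]\,\|k\|_J^2.$$
The hypothesis forces $\beta\|I-U\|_J^2/\alpha < 1$, so the interval $\bigl(\beta\|I-U\|_J^2/\alpha,\,1\bigr)$ is nonempty, and choosing any $\epsilon$ in this interval renders the bracket strictly positive. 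This produces a universal lower weaving bound and, together with the upper bound, finishes the proof.
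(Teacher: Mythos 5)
Your proof is correct, and it rests on the same decomposition as the paper's: write $Uz_n = z_n - (I-U)z_n$ for $n \in \sigma^c$, use the lower frame bound of $\{z_n\}$ on the full sum, and absorb the perturbation via the Bessel bound applied to $(I-U)^{*}k$. The only real divergence is the elementary inequality used to separate the main term from the perturbation: the paper passes to square roots and applies the reverse triangle (Minkowski) inequality in $\ell^2$, obtaining the lower weaving bound $\bigl(\sqrt{\alpha} - \sqrt{\beta}\,\|I-U\|_{J}\bigr)^2$ directly, whereas you stay at the level of squares and use the weighted Young inequality $|a-b|^2 \geq (1-\epsilon)|a|^2 - (\epsilon^{-1}-1)|b|^2$, which costs you a free parameter but avoids manipulating square roots of sums. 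The two are equivalent in strength: optimizing your bracket over $\epsilon$ (at $\epsilon = \sqrt{\beta\|I-U\|_J^2/\alpha}$) recovers exactly the constant $\bigl(\sqrt{\alpha} - \sqrt{\beta}\,\|I-U\|_{J}\bigr)^2$. Your upper bound $\beta(1+\|U\|_J^2)\|k\|_J^2$ is also slightly more carefully stated than the paper's. One small point worth making explicit in either argument is that the adjoint here is the Krein adjoint and that $\|(I-U)^{[*]}\|_J = \|I-U\|_J$ because the fundamental symmetry is a $J$-norm isometry; you flag this, which is good.
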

 
 \begin{proof}
 Clearly $U$ is invertiable and $\{Uz_n\}_{n=1}^\infty$ is a frame for $\mathcal K$.
 For every $k \in \mathcal K$ we have,      
 \begin{align*}
 \left(\sum\limits_{n \in \sigma} |[k, z_n]|^2 + \sum\limits_{n \in \sigma^c} |[k, Uz_n]|^2 \right)^\frac{1}{2} \\
 &\hspace*{-3cm} = \left(\sum\limits_{n\in\sigma} |[k, z_n]|^2 + \sum\limits_{n \in \sigma^c} |[k, z_n] - [(I - U^*)k, z_n]|^2 \right)^\frac{1}{2}\\
 & \hspace*{-3cm} \geq  \left(\sum\limits_{n \in \mathbb{N}} |[k, z_n] - \chi_{\sigma^c}(n)[(I - U^*)k, z_n]|^2 \right)^\frac{1}{2}\\
  & \hspace*{-3cm} \geq \left (\sum\limits_{n \in \mathbb{N}} |[k, z_n]|^2\right)^\frac{1}{2} - \left(\sum\limits_{n \in \mathbb{N}}|\chi_{\sigma^c}(n)[(I - U^*)k, z_n]|^2 \right)^\frac{1}{2}\\
  & \hspace*{-3cm} = \left (\sum\limits_{n \in \mathbb{N}} |[k, z_n]|^2\right)^\frac{1}{2} - \left(\sum\limits_{n \in \sigma^c}(|[(I - U^*)k, z_n]|^2 \right)^\frac{1}{2}\\
  & \hspace*{-3cm} \geq \sqrt{\alpha} \|k\|_{J} - \sqrt{\beta} \|(I - U^*)k\|_{J}\\
  & \hspace*{-3cm} \geq  (\sqrt{\alpha} - \sqrt{\beta} \|(I - U^*)\|_{J})\|k\|_{J}.
 \end{align*}

 

Again for every $k \in \mathcal K$ we have, 
 \begin{eqnarray*}
 \sum\limits_{n \in \sigma} |[k, z_n]|^2 + \sum\limits_{n \in \sigma^c} |[k, Uz'_n]|^2 
 &\leq&  \sum\limits_{n \in \mathbb{N} } |[k, z_n]|^2 + \sum\limits_{n \in \mathbb{N}} |[k, Uz'_n]|^2
 \\ &\leq& ( \beta + \|U\|^2) \|k\|^2.
 \end{eqnarray*} 
Thus $\{z_n\}_{n=1}^\infty$ and $\{Uz_n\}_{n=1}^\infty$ are weaving frames.
\end{proof}

\begin{ex}

  Let $k_1 = (1,0),$ and $k_2 = (0, 2) $ be two elements in the Krein sapce $\mathcal{K}=(\mathbb{R}^{2}, [., . ])$. Let us define the indefinite inner product as  $[x, y] = [(x_1, x_2), (y_1, y_2)] = x_1y_1 - x_2y_2$. The set $\{k_i\}^2_{i = 1}$  forms a frame in the krein space with bounds ($\alpha$ = 1 )and ($\beta$ = 2).
				
  Suppose $U$ is a bounded linear operator given by $U(x_1, x_2) = (x_1, 0.8x_2)$ and compute $\|I - U\|_{J} = 0.2$. Since $\|I - U\|^2_{J} = 0.04 < (\dfrac{\alpha}{\beta} = \dfrac{1}{2})$, then the Proposition \ref{2.8} ensures that $\{k_i\}^2_{i = 1}$ and $\{Uk_i\}^2_{i =1}$  are weaving frames.
	\end{ex}

\begin{ex}
	
Here we discuss a counter example of the Proposition \ref{2.8}.
	 Let us consider two elements $k_1 = (1,0),$ and $k_2 = (0, 3) $ in the Krein sapce $\mathcal{K}=(\mathbb{R}^{2}, [., . ])$ and the corresponding indefinite inner product is defined by $[x, y] = [(x_1, x_2), (y_1, y_2)] = x_1y_1 - x_2y_2$. Then the set $\{k_1, k_2\}$ forms a frame for the Krein space with bounds ($\alpha$ = 1 )and ($\beta$ = 3).
	
  Let $U$ be a bounded and  linear operator given by $U(x_1, x_2) = (-x_1, x_2)$, clearly $\{Uk_i\}^2_{i =1}$  is a frame. It is easy to verify that  $\{k_i\}^2_{i = 1}$ and $\{Uk_i\}^2_{i =1}$  are weaving frames.
However $\|I - U\|^2_{J} = 4 > (\dfrac{\alpha}{\beta} = \dfrac{1}{3})$.
	
\end{ex}

The following result establishes a condition for weaving frames through their respective frame operators.

\begin{prop}
	
	Let  $K$ = $\{z_n\}^\infty_{n =1}$ and $K'$ = $\{z'_n\}^\infty_{n= 1}$ be weaving frames  with universal frame bounds $\alpha$, $\beta$ and the corresponding frame operators  $S_K$ and $S_K'$, respectively .
	If  $\|S_K\|_J \|S_K^{-1} - S^{-1}_{K'}\|_J < \dfrac{\alpha}{\beta}$ (or $\|S_K'\|_J \|S_K^{-1} - S^{-1}_{K'}\|_J < \dfrac{\alpha}{\beta}$), then $S^{-1}_K = {\{S^{-1}_kz_n}\}^\infty_{n= 1}$ and $S^{-1}_{K'} = {\{S^{-1}_{k'}z'_n}\}^\infty_{n= 1}$
	are  weaving frames.
	
\end{prop}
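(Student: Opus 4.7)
The plan is to reduce the assertion, via the Krein self-adjointness of $S_K$ and $S_{K'}$ together with a change of variable, to a perturbation estimate in the spirit of Proposition~\ref{2.8}.

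First I would verify that $S_K$ is Krein self-adjoint: expanding $S_K(k) = \sum_n[k, z_n]z_n$ gives $[S_Kk, k'] = \sum_n[k, z_n][z_n, k'] = [k, S_Kk']$, so $S_K$, and hence $S_K^{-1}$, is self-adjoint with respect to $[\cdot,\cdot]$ (similarly for $S_{K'}$). This allows the rewriting
$$\sum_{n\in\sigma}|[k, S_K^{-1}z_n]|^2 + \sum_{n\in\sigma^c}|[k, S_{K'}^{-1}z'_n]|^2 = \sum_{n\in\sigma}|[S_K^{-1}k, z_n]|^2 + \sum_{n\in\sigma^c}|[S_{K'}^{-1}k, z'_n]|^2.$$
Setting $l := S_K^{-1}k$ and $A := S_{K'}^{-1}S_K$, the right-hand side becomes $\sum_{n\in\sigma}|[l, z_n]|^2 + \sum_{n\in\sigma^c}|[Al, z'_n]|^2$, and the identity $A - I = (S_{K'}^{-1} - S_K^{-1})S_K$ yields $\|A - I\|_J \leq \|S_K^{-1} - S_{K'}^{-1}\|_J \|S_K\|_J < \alpha/\beta$ by hypothesis.

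I would then run the argument of Proposition~\ref{2.8}. Writing $[Al, z'_n] = [l, z'_n] + [(A-I)l, z'_n]$ and applying the $\ell^2$-triangle inequality gives
$$\left(\sum_{n\in\sigma}|[l, z_n]|^2 + \sum_{n\in\sigma^c}|[Al, z'_n]|^2\right)^{1/2} \geq \left(\sum_{n\in\sigma}|[l, z_n]|^2 + \sum_{n\in\sigma^c}|[l, z'_n]|^2\right)^{1/2} - \left(\sum_{n\in\sigma^c}|[(A-I)l, z'_n]|^2\right)^{1/2}.$$
The weaving inequality for $K$ and $K'$ bounds the first term below by $\sqrt{\alpha}\,\|l\|_J$, while the frame upper bound of $\{z'_n\}$ (at most $\beta$) bounds the second above by $\sqrt{\beta}\,\|A-I\|_J\,\|l\|_J$. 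The residual coefficient $\sqrt{\alpha} - \sqrt{\beta}\|A-I\|_J$ is strictly positive since $\sqrt{\beta}\|A-I\|_J < \alpha/\sqrt{\beta} \leq \sqrt{\alpha}$ (using $\alpha \leq \beta$). Combining with $\|l\|_J \geq \|k\|_J/\|S_K\|_J$ transfers the bound back to $\|k\|_J$; the alternative hypothesis involving $\|S_{K'}\|_J$ is handled by the symmetric substitution $l := S_{K'}^{-1}k$ with $A := S_K^{-1}S_{K'}$.

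The upper weaving bound is routine: each canonical dual $\{S_K^{-1}z_n\}$ and $\{S_{K'}^{-1}z'_n\}$ is itself a frame with upper bound at most $1/\alpha$ (since the original frames have lower bound at least the universal weaving lower bound $\alpha$), so the partial sums are dominated by $(2/\alpha)\|k\|_J^2$. The principal obstacle is identifying the correct reformulation: the dual-frame weaving problem involves two different inverse operators attached to two different frames, and the trick is to conjugate the inequality by $S_K^{-1}$ (legitimate thanks to Krein self-adjointness) so that only a single perturbing operator $A = S_{K'}^{-1}S_K$ remains, for which the hypothesis guarantees exactly the smallness needed to reuse the Proposition~\ref{2.8}-style estimate.
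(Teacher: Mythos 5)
Your proof is correct and follows essentially the same route as the paper's: Krein self-adjointness of the frame operators to move $S_K^{-1}$ and $S_{K'}^{-1}$ onto $k$, then the perturbation decomposition $S_{K'}^{-1}k = S_K^{-1}k + (S_{K'}^{-1}-S_K^{-1})k$ and the $\ell^2$ triangle inequality against the universal lower bound $\alpha$ and the upper bound $\beta$ --- your operator $A = S_{K'}^{-1}S_K$ merely repackages this, since $(A-I)l = (S_{K'}^{-1}-S_K^{-1})k$. If anything your write-up is the more complete one, as you explicitly transfer the lower estimate back to $\|k\|_J$ via $\|S_K^{-1}k\|_J \geq \|k\|_J/\|S_K\|_J$ and check positivity of the residual constant, steps the paper leaves implicit.
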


\begin{proof}
	Without loss of generality let us  consider  $\|S_K\|_J \|S_K^{-1} - S^{-1}_{K'}\|_J < \dfrac{\alpha}{\beta}$. 
	For every $\sigma \subset \mathbb{N}$ and every $k \in (\mathcal{K}, [., .]$) we have,
	\begin{eqnarray*}
		&& \left (\sum\limits_{n \in \sigma} |[k, S^{-1}_Kz_n]|^2 + \sum\limits_{n \in \sigma^c} |[k, S^{-1}_{K'}z'_n]|^2 \right)^\frac{1}{2} \\
		&=& \left(\sum\limits_{n\in\sigma}|[S^{-1}_Kk, z_n]|^2 + \sum\limits_{n \in \sigma^c} |[S^{-1}_{K'}k, z'_n]|^2\right)^\frac{1}{2}\\ 
		&\geq& \left(\sum\limits_{n\in\sigma}|[S^{-1}_Kk, z_n]|^2 + \sum\limits_{n \in \sigma^c} |[S^{-1}_{K'}k +(S^{-1}_{K'} - S^{-1}_K)k, z'_n]|^2\right)^\frac{1}{2}
	\\	&\geq& \sqrt{\alpha} \|S^{-1}_Kk\| - \sqrt{\beta} \|(S^{-1}_{K'} - S^{-1}_K)k\|
\end{eqnarray*}
		On the other hand we have,
	\begin{eqnarray*}
			\\&&\hspace{-1cm}\sum\limits_{n \in \sigma} |[k, S^{-1}_Kz_n]|^2 + \sum\limits_{n \in \sigma^c} |[k, S^{-1}_{K'}z'_n]|^2 \\&\leq& \sum\limits_{n \in \mathbb{N}} |[S^{-1}_Kk, z_n]|^2 + \sum\limits_{n \in \mathbb{N}} |[S^{-1}_{K'}k, z'_n]|^2 \\&\leq& \sqrt{\beta}(\|S^{-1}_K\|^2 + \|S^{-1}_{K'}\|^2)\|k\|^2.
	\end{eqnarray*}
	This completes the proof.
%
%
	
\end{proof}

In the following theorem, we characterize weaving frames in Krein space using the projection operator.

\begin{thm}
	
	Let $\{z_n\}_{n=1}^\infty$ and $\{z'_n\}_{n=1}^\infty$ be two frames for Krein space $\mathcal{K},[., .]$, then the following are equivalent:
	
	\begin{enumerate}
	
	\item  The frames $\{z_n\}_{n=1}^\infty$ and $\{z'_n\}_{n=1}^\infty$ are weaving frames for $\mathcal{K}$.
	
\item	 For every $\sigma\subset \mathbb{N}$, let $Z_\sigma = \overline{span}\{z_n\}_{n \in\sigma}$,  $Z_{\sigma^{c}}= \overline{span}\{z'_n\}_{n \in\sigma^{c}}$ and let $P$ be the orthogonal projection of $\mathcal{K}$ onto $Z^\perp_{\sigma}$. Then $P|Z_{\sigma^{c}}$ is the orthogonal projection onto $Z^\perp_\sigma$ and $\{Pz'_n\}_ {n \in \sigma^c}$ is a frame for $Z^\perp_{\sigma}$.
		\end{enumerate}
\end{thm}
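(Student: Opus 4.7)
The plan is to first invoke Theorem~\ref{char} to transport the entire problem to the canonical Hilbert space $(\mathcal{K},[\cdot,\cdot]_{J})$, so that ``orthogonal complement'' and ``orthogonal projection'' carry their standard Hilbert-space meaning. In this setting the statement becomes a Hilbert-space assertion of the same form and can be handled with standard projection manipulations; at the end Theorem~\ref{char} translates the conclusion back to the original Krein inner product.

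For the implication $(1)\Rightarrow(2)$, I would restrict the weaving inequality~(\ref{1}) to vectors $k\in Z_{\sigma}^{\perp}$. The $\sigma$-terms vanish since $z_{n}\in Z_{\sigma}$ for every $n\in\sigma$, and the identity $[k,z'_{n}]_{J}=[k,Pz'_{n}]_{J}$, valid because $(I-P)z'_{n}\in Z_{\sigma}$ is $J$-orthogonal to $k$, converts the surviving $\sigma^{c}$-sum into the frame inequalities for $\{Pz'_{n}\}_{n\in\sigma^{c}}$ on $Z_{\sigma}^{\perp}$. The lower frame bound then forces $\overline{\mathrm{span}}\{Pz'_{n}\}_{n\in\sigma^{c}}=Z_{\sigma}^{\perp}$, giving in particular that $P|_{Z_{\sigma^{c}}}$ surjects onto $Z_{\sigma}^{\perp}$.

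For $(2)\Rightarrow(1)$, I would take an arbitrary $k\in\mathcal{K}$ and decompose $k=k_{1}+k_{2}$ with $k_{1}=(I-P)k\in Z_{\sigma}$ and $k_{2}=Pk\in Z_{\sigma}^{\perp}$. The $\sigma$-sum collapses to $\sum_{n\in\sigma}|[k_{1},z_{n}]_{J}|^{2}$, while each $\sigma^{c}$-term splits as $[k,z'_{n}]_{J}=[k_{1},z'_{n}]_{J}+[k_{2},Pz'_{n}]_{J}$. A Young-type inequality $|a+b|^{2}\ge(1-\theta)|b|^{2}-\tfrac{1-\theta}{\theta}|a|^{2}$, combined with the lower frame bound of $\{Pz'_{n}\}_{n\in\sigma^{c}}$ on $Z_{\sigma}^{\perp}$ and the Bessel bound of $\{z'_{n}\}$, yields a bound of the shape $(1-\theta)A\|k_{2}\|_{J}^{2}-\tfrac{(1-\theta)\beta_{2}}{\theta}\|k_{1}\|_{J}^{2}$ for the $\sigma^{c}$-sum. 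The projection condition ``$P|_{Z_{\sigma^{c}}}$ is the orthogonal projection onto $Z_{\sigma}^{\perp}$'' forces $Z_{\sigma}+Z_{\sigma^{c}}=\mathcal{K}$, which together with the Bessel bounds of $\{z_{n}\}$ and a suitable choice of $\theta$ absorbs the negative $\|k_{1}\|_{J}^{2}$ contribution and extracts a universal positive lower weaving constant. The upper weaving bound follows trivially from the Bessel property of the two original frames.

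The main obstacle is precisely the $(2)\Rightarrow(1)$ direction: hypothesis (2) gives frame control only on $Z_{\sigma}^{\perp}$, whereas weaving must hold for every $k\in\mathcal{K}$, and in general the subfamily $\{z_{n}\}_{n\in\sigma}$ need not be a frame for its own closed span $Z_{\sigma}$, so the $k_{1}$ contribution to the weaving sum cannot be controlled in isolation. The delicate point will be to use the projection condition to re-route the bound for the $Z_{\sigma}$ component through the $\{z'_{n}\}$-side via the Young decomposition above, and thereby close the estimate uniformly in $\sigma$.
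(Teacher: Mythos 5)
Your direction $(1)\Rightarrow(2)$ follows the paper's argument: restrict the weaving inequality to $k\in Z_\sigma^\perp$, observe that the $\sigma$-terms vanish, and use $[k,z'_n]=[k,Pz'_n]$ (which you justify correctly via $(I-P)z'_n\in Z_\sigma$; the paper instead writes the inaccurate shorthand $P z'_n = z'_n$). The problem is $(2)\Rightarrow(1)$, and there your proposal does not close. Your Young-type estimate reduces the lower weaving bound to showing that, for some $c>0$, $\sum_{n\in\sigma}|[k_1,z_n]|^2\geq c\,\|k_1\|_{J}^2$ for all $k_1\in Z_\sigma$, i.e.\ that $\{z_n\}_{n\in\sigma}$ has a lower frame bound on its own closed span. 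You explicitly identify this as the obstacle, but your final sentence (``re-route the bound through the $\{z'_n\}$-side'') is not an argument: hypothesis (2) only gives information about $\{Pz'_n\}_{n\in\sigma^c}$ on $Z_\sigma^\perp$ and about the surjectivity of $P|_{Z_{\sigma^c}}$, neither of which controls the pairing of $k_1\in Z_\sigma$ with anything. Without that lower bound the negative term $-\tfrac{1-\theta}{\theta}\beta_2\|k_1\|_{J}^2$ cannot be absorbed, so the implication is not established.

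For comparison, the paper's own proof of $(2)\Rightarrow(1)$ resolves this by simply asserting that $\{z_n\}_{n\in\sigma}$ is a frame for $Z_\sigma$ ``since $\{z_n\}_{n=1}^\infty$ is a frame for $\mathcal K$'' --- a deduction that fails in general in infinite dimensions, since a subfamily of a frame need not be a frame for its closed span --- and it moreover replaces $[k,z'_n]$ by $[k_\sigma^\perp,z'_n]$ in the $\sigma^c$-sum, silently discarding the cross term $[k_\sigma,z'_n]$ that your Young-inequality device is designed to handle. So your write-up treats the cross term more carefully than the paper does, but both arguments leave the same essential gap (the lower frame bound for $\{z_n\}_{n\in\sigma}$ on $Z_\sigma$, uniformly in $\sigma$); in your case the gap is acknowledged rather than filled. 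To complete the proof one would need either to add that lower-bound condition to statement (2), as in the Hilbert-space version of \cite{Be16}, or to derive it from additional hypotheses; as written, neither your proposal nor the paper establishes $(2)\Rightarrow(1)$.
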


\begin{proof}
	
 \noindent  (\underline{1 $\implies$ 2})\\
 First we prove that for every $\sigma \subset \mathbb N$, $P|Z_{\sigma^c}$ be the orthogonal projection  onto $Z^\perp_{\sigma^c}$.
 
	Let us define $ Z_{\sigma} = \overline{span} \{z_n\}_{n \in \sigma}$, 
	$ Z_{\sigma^c} = \overline{span} \{z'_n\}_{n \in \sigma^c}$ and P be the orthogonal projection of $\mathcal{K}$ onto $Z_{\sigma}^\perp$.
	Assumed condition implies that for every $\sigma \subset \mathbb N$, $(\{z_n\}_{n \in \sigma} \cup \{z'_n\}_{n \in \sigma^c})$ is a  frame for $\mathcal{K}$.
	Since $\{z'_n\}_{n \in \sigma^c}$ spans $Z_{\sigma}^\perp$ then every $k \in Z_{\sigma}^\perp$ can be written as linear combination of $\{z'_n\}_{n \in \sigma^c}$, therefore  $P|Z_{\sigma^c}$ be the orthogonal projection  onto $Z^\perp_{\sigma}$.
	

	Since $(\{z_n\}_{n \in \sigma} \cup \{z'_n\}_{n \in \sigma^c})$ is a  frame for $\mathcal{K}$, then for every  $k \in \mathcal{K}$, there are universal constants $\alpha \leq \beta$ we have,
	$$\alpha \|k\|_{J}^2 \leq \sum\limits_{n \in \sigma} |[k, z_n]|^2 + \sum\limits_{n \in \sigma^c} |[k, z'_n]|^2 \leq \beta \|k\|_{J}^2.$$
Hence	for every $k \in Z_{\sigma}^\perp$ we have,
	$$\alpha \|k\|_{J}^2 \leq  \sum\limits_{n \in \sigma^c} |[k, z'_n]|^2 \leq \beta \|k\|_{J}^2.$$
	Since $P(z'_n) = z'_n$, then $\{Pz'_n\}_{n \in \sigma^c}$ is a frame for $Z_{\sigma}^\perp.$\\
	
	\noindent  (\underline{2 $\implies$ 1})\\
	Since $\{z_n\}_{n=1}^\infty$ is a frame  for $\mathcal{K}$ and $ Z_\sigma = \overline{span}(\{z_n\}_{n\in \sigma})$) then for every $\sigma \subset\mathbb{N}$,
	$\{z_n\}_{n\in \sigma}$ is frame for $Z_\sigma.$
	Therefore, for every $\sigma\subset \mathbb{N}$ and for every $k_\sigma \in Z_\sigma$, there exist $0<\alpha_1\leq \beta_1< \infty$ so that, 
	\begin{equation}\label{12}
	\alpha_1 \|k_\sigma\|_{J}^2 \leq \sum\limits_{n \in \sigma} |[k_\sigma, z_n]|^2  \leq \beta_1 \|k_\sigma\|_{J}^2.
	\end{equation}
	Moreover, $\{Pz_n\}_{n \in \sigma^c}$ is a frame for $Z^\perp_{\sigma}.$ Therefore, for every $\sigma\subset \mathbb{N}$ and for every $k^\perp_{\sigma} \in Z^\perp_\sigma$, there exist $0<\alpha_2\leq \beta_2< \infty$ so that
	\begin{equation}\label{13}
		\alpha_2\|k_\sigma^\perp\|_{J}^2 \leq \sum\limits_{n \in \sigma^c} |[k_\sigma^\perp, Pz'_n]|^2  \leq \beta_2 \|k_\sigma^\perp\|_{J}^2.
	\end{equation}
		Furthermore, since $P|Z_{\sigma^c}$ is the orthogonal projection  onto $Z^\perp_{\sigma}$, then from the equation (\ref{13}) we have,
		\begin{equation}\label{14}
		\alpha_2\|k_\sigma^\perp\|_{J}^2 \leq \sum\limits_{n \in \sigma^c} |[k_\sigma^\perp, z'_n]|^2  \leq \beta_2 \|k_\sigma^\perp\|_{J}^2.
	\end{equation}
	Again for every $k \in \mathcal{K}$ we have, $k = k_\sigma + k^\perp_\sigma$.
	Thus applying equations (\ref{12}) and (\ref{14}), for every $\sigma\subset \mathbb{N}$ we obtain,
	
	$$\alpha^*(\|k_\sigma^\perp\|_{J}^2 +\|k_\sigma^\perp\|_{J}^2 )\leq \sum\limits_{n \in \sigma} |[k_\sigma, z'_n]|^2 +\sum\limits_{n \in \sigma^c} |[k_\sigma^\perp, z'_n]|^2  \leq \beta^* (\|k_\sigma^\perp\|_{J}^2 +\|k_\sigma^\perp\|_{J}^2),$$
	where $\alpha^* = \min (\alpha_1, \alpha_2)$	and $\beta^* = \beta_1 + \beta_2$. Consequently, for every $\sigma\subset \mathbb{N}$ and for every $k \in \mathcal{K}$ we obtain,
	
	$$\alpha^*\|k\|_{J}^2  \leq \sum\limits_{n \in \sigma} |[k_\sigma, z'_n]|^2 +\sum\limits_{n \in \sigma^c} |[k_\sigma^\perp, z'_n]|^2  \leq \beta^* \|k\|_{J}^2.$$
This completes the proof.	
\end{proof}

\begin{defn}\cite{Gi12}\label{Bessel} 
A Bessel sequence $\{z_n\}^\infty_{n = 1}$ is said to be a $J$-frame for ($\mathcal{K}, [., .]$) if $R(T_{\sigma}P^+)$ is maximal uniformly $J$-positive subspace of $\mathcal{K}$ and $R(T_{\sigma}P^-)$ is maximal uniformly $J$-negative subspace of $\mathcal{K}$.

	  \end{defn}

\begin{rem}\label{rem}
	Let ($\mathcal{K}, [., .]$) be a Krein space with fundamental symmetry $J$. Consider $\{z_n\}^\infty_{n = 1}$ be a Bessel sequence in $\mathcal{K}$. Let 
	$$T_{\sigma}: \ell^2(\mathbb N)\rightarrow \mathcal{K},$$ be the associated synthesis operator. Define the index sets by 
	$$\sigma_+ =\{n \in\mathbb{N} : [z_n, z_n]\geq0\}\quad and\quad \sigma_- =\{n \in\mathbb{N} : [z_n, z_n]< 0\},$$ then the Hilbert space $\ell^2(\mathbb N)$ can be orthogonally decomposed as 
	$$\ell^2(\mathbb N) = \ell^2(\sigma_+) \oplus \ell^2(\sigma_-).$$
	Let $P^+$ and $P^-$ be the orthogonal of projections from $\ell^2(\mathbb N)$ onto the subspaces $\ell^2(\sigma_+)$ and $\ell^2(\sigma_-)$, respectively. If $M_{\sigma_+} = \overline{span} \{z_n\}_{n \in \sigma_+}$ and $M_{\sigma_-} = \overline{span} \{z_n\}_{n \in \sigma_-},$
	then it is easy to check that $\overline{span} \{z_n\}_{n \in \sigma_+} \subset R(T_{\sigma}P^+) \subset M_{\sigma_+},$
	$\overline{span} \{z_n\}_{n \in \sigma_-} \subset R(T_{\sigma}P^-) \subset M_{\sigma_-} \quad.$ Thus we have, 
	$$R(T_\sigma )= R(T_{\sigma}P^+) +R(T_{\sigma}P^-).$$ 
	
\end{rem}

We discuss $J$-weaving frames in Krein spaces, examining their structural properties and implications.

\begin{defn}\label{J-weaving}
	Let $\{z_n\}^\infty_{n= 1}$ and $\{z'_n\}^\infty_{n= 1}$ be two $J$-frames for Krein space ($\mathcal{K}, [., .]$) then they are said to be $J$-weaving frames if for every subset $\sigma \subset{\mathbb{N}}$,  $\{z_n\}_{n \in \sigma} \cup \{z'_n\}_{n \in \sigma^c}$ is a $J$-frame for $\mathcal{K}$.
	
If $\{z_n\}^\infty_{n= 1}$ and $\{z'_n\}^\infty_{n= 1}$ are $J$-weaving frames then for every subset $\sigma \subset\mathbb{N}, \{z_n\}_{n \in \sigma}  \cup  \{z'_n\}_{n \in \sigma^c}$ is a $J$-frame, then due to its maximality the range is closed. Thus applying \cite[Corollary 1.5.2]{An79} we have, $$R(T_{\sigma}P^+) = M_{\sigma_+} \quad and \quad R(T_{\sigma}P^-) = M_{\sigma_-} ,$$ and hence
	$$R(T_{\sigma}P^+) +R(T_{\sigma}P^-) = M_{\sigma_+} + M_{\sigma_-} = \mathcal{K}.$$ It is to be noted that $M_{\sigma_+}  = \overline{span} (\{z_n\}_{n \in \sigma_+} \cup \{z'_n\}_{n \in \sigma^c_+})$ and $M_{\sigma_-}  = \overline{span} (\{z_n\}_{n \in \sigma_-} \cup \{z'_n\}_{n \in \sigma^c_-})$.
\end{defn}

	

\begin{ex}
	If we consider $$
	\{z_n\}_{n =1}^3 = \left\{ \left(\dfrac{1}{2.16}, 0, 0 \right), \left(0, 0, \dfrac{2}{2.17} \right), \left(0, \dfrac{1}{2.18}, 0 \right)\right\}
	$$ and 
$$
\{z'_n\}_{n =1}^3 = \left\{  \left(\dfrac{100}{\sqrt{2}}, 0, \dfrac{101}{\sqrt{2}}\right), \left(\dfrac{102}{\sqrt{3}}, 0, \dfrac{103}{\sqrt{2}}\right), \left(0, \dfrac{1}{2.19}, 0\right)\right\},
$$ then it is easy to verify that $\{z_n\}_{n =1}^3$ and $\{z'_n\}_{n =1}^3$ are J-weaving frames for Krein space $\mathcal{K} = (\mathbb{R}^3, [., .])$ where  $$[(x_1, x_2, x_3), (y_1, y_2, y_3)] = x_1y_1 - x_2 y_2 + x_3 y_3.$$ 

Using the associated index sets in Remark \ref{rem}, we have the following cases:
 
{\bf Case 1:} $\sigma_+ = \{1\}$, $ M_{\sigma_+} = \overline{span} (\{z_1\} \cup \{z'_2\})$ is a maximal uniformly $J$-positive subspace of $\mathcal{K}.$ 
 
 {\bf Case 2:} $\sigma_+ = \{2\}$, $ M_{\sigma_+} = \overline{span} (\{z_2\} \cup \{z'_1\})$ is a maximal uniformly $J$-positive subspace of $\mathcal{K}.$
	
{\bf Case 3:} $\sigma_+ = \{1, 2\}$,  $ M_{\sigma_+} = \overline{span} \{z_n\}_{n = 1}^2 $ is a maximal uniformly $J$-positive subspace of $\mathcal{K}.$
	 
{\bf Case 4:}	Otherwise,  $ M_{\sigma_+} = \overline{span} \{z'_n\}_{n = 1}^2 $ is a maximal uniformly $J$-positive subspace of $\mathcal{K}.$ 
	  
It is to be noted that for every subset $\sigma_+,  M_{\sigma_+} = \overline{span} (\{z_n\}_{n \in   \sigma_+} \cup \{z'_n\}_{n \in \sigma^c_+}),$ is a maximal uniformly $J$-positive subspace of $\mathcal{K}.$ 
	  
Analogously for every subset $\sigma_-,  M_{\sigma_-} = \overline{span} (\{z_n\}_{n \in   \sigma_-} \cup \{z'_n\}_{n \in \sigma^c_-}),$ is a maximal uniformly $J$-negative subspace of $\mathcal{K}.$

\end{ex}

The following is a non-example of the definition \ref{J-weaving}, illustrating that not every weaving frames in Krein space necessarily qualify as  $J$-weaving frames.

\begin{ex}

	
	Let $(\mathcal{K} = \mathbb{C}^3, [., .])$ be a Krein space and define the associated indefinite inner product as $$[(x_1, x_2, x_3), (y_1, y_2, y_3)] = x_1\overline y_1 + x_2\overline y_2 - x_3\overline y_3.$$
Let us consider two frames 
$$ 
\{z_n\}_{n =1}^3 = \left\{ \left(3, 0, \dfrac{3}{\sqrt{2}}\right), \left(0, \dfrac{2}{\sqrt{3}}, 0\right), \left(0, 0, \dfrac{1}{\sqrt{3}}\right)\right\}
$$ and 
$$
\{z'_n\}_{n =1}^3 = \left\{ \left(1, 0, 0\right), \left(0, 3, \dfrac{3}{\sqrt{2}}\right), \left(0, 0,1\right )\right\}
$$ in $\mathbb C^3$. Then it is easy to verify that they are weaving frames, however they are not $J$-weaving frames.
	
	Here  $M_{\sigma_+}  = \overline{span} (\{z_n\}_{n \in \sigma_+} \cup \{z'_n\}_{n \in \sigma^c_+})$, $M_{\sigma_-}  = \overline{span} (\{z_n\}_{n \in \sigma_-} \cup \{z'_n\}_{n \in \sigma^c_-})$ where $\sigma_+ =\{n \in \mathbb{N}  : [z_n, z_n]\geq0\}$, $\sigma_- =\{n\in\mathbb{N} : [z_n, z_n]< 0\}$,  
	$\sigma^c_+ =\{n\in\mathbb{N}  : [z'_n, z'_n]\geq0\}$ and $\sigma^c_- = \{n \in\mathbb{N}  : [ z'_n, z'_n]< 0\}$. Thus $M_{\sigma_+}$ is not uniformly $J$-positive, whenever we consider $\sigma_+ = \{1\}$. Therefore, $\{z_n\}_{n = 1}^3$ and $\{z'_n\}_{n = 1}^3$ are not $J$-weaving frames.
	
\end{ex}

\begin{prop}\label{2.18}
If $\{z_n\}^\infty_{n= 1}$ and $\{z'_n\}^\infty_{n= 1}$ are $J$-weaving frames on Krein space $\mathcal{K}$ then for every $ \sigma_{\pm} \subset{\mathbb{N}}$ and for every $k \in M_{\sigma_\pm}$ there exist constants $ \beta_{-} \leq  \alpha_- < 0 < \alpha_+ \leq \beta_{+}$ we have, 


\begin{equation}\label{19}
	\alpha_{\pm} [k, k]  \leq \sum\limits_{n \in \sigma_{\pm}} |[k, z_n]|^2 + \sum\limits_{n \in \sigma_{\pm}^c} |[k, z'_n]|^2 \leq \beta_{\pm} [k, k].
\end{equation}

\end{prop}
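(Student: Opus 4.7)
The plan is to reduce the bound in the proposition to the ordinary Krein-space frame inequality for the woven sequence associated with $\sigma_+$ (resp.\ $\sigma_-$), and then convert between $\|k\|_J^2$ and $[k,k]$ using the uniform $J$-definiteness of $M_{\sigma_+}$ (resp.\ $M_{\sigma_-}$). I will treat the ``$+$''-case in detail; the ``$-$''-case is completely symmetric.

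First I would fix $\sigma_+ \subset \mathbb{N}$ and form the woven sequence $\{w_n\}_{n=1}^\infty$ with $w_n = z_n$ for $n \in \sigma_+$ and $w_n = z'_n$ for $n \in \sigma_+^c$. By Definition \ref{J-weaving} this sequence is a $J$-frame, and Definition \ref{Bessel} then gives that $M_{\sigma_+} = R(T_{\sigma_+}P^+)$ is a maximal uniformly $J$-positive subspace. The middle expression of the asserted inequality is precisely $\sum_{n=1}^\infty |[k, w_n]|^2$.

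Next I would establish that $\{w_n\}$ obeys the ordinary Krein-space frame inequality, i.e.\ there exist $A, B > 0$ with $A \|k\|_J^2 \leq \sum_n |[k, w_n]|^2 \leq B \|k\|_J^2$ for every $k \in \mathcal{K}$. The upper bound is the Bessel property inherited from the two component frames. For the lower bound, Remark \ref{rem} together with the maximality forces $R(T_{\sigma_+}) = M_{\sigma_+} + M_{\sigma_-} = \mathcal{K}$, so the synthesis operator $T_{\sigma_+}$ is surjective as a map between Hilbert spaces; the open mapping theorem then gives that $\{w_n\}$ is a Hilbert-space frame, which by the equivalence recalled in the introduction is the same as being a (Krein-space) frame.

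Finally I convert to the $[k, k]$-form. Uniform $J$-positivity of $M_{\sigma_+}$ supplies $c \in (0, 1]$ with $[k, k] \geq c \|k\|_J^2$ for every $k \in M_{\sigma_+}$; combined with the trivial inequality $[k, k] \leq \|k\|_J^2$ that follows from the fundamental decomposition $k = k_{\delta^+} + k_{\delta^-}$, one obtains $A [k, k] \leq A \|k\|_J^2 \leq \sum_n |[k, w_n]|^2 \leq B \|k\|_J^2 \leq (B/c) [k, k]$, so $\alpha_+ := A$ and $\beta_+ := B/c$ work. The symmetric computation on $M_{\sigma_-}$ with uniform $J$-negativity $-[k, k] \geq c' \|k\|_J^2$ and $-[k, k] \leq \|k\|_J^2$ yields $\alpha_- := -A$ and $\beta_- := -B/c'$, and $c, c' \in (0, 1]$ delivers the required ordering $\beta_- \leq \alpha_- < 0 < \alpha_+ \leq \beta_+$. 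The main obstacle is the middle step: extracting a genuine \emph{lower} Krein-space frame bound for $\{w_n\}$ purely from the range-condition form of the $J$-frame definition, which is exactly where the maximality of $M_{\sigma_+}$ and $M_{\sigma_-}$ is essential.
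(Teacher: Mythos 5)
Your argument is correct and follows essentially the same route as the paper: the $J$-frame property of each weaving gives that $M_{\sigma_\pm}$ is maximal uniformly $J$-definite and that the associated synthesis operator is surjective, which yields the frame inequality, and uniform definiteness converts between $\|k\|_J^2$ and $\pm[k,k]$. The only cosmetic difference is that the paper applies the surjectivity criterion to $T_{\sigma}P^{\pm}$ viewed as a map onto the Hilbert space $(M_{\sigma_\pm}, \pm[.,.])$, so the $[k,k]$-form of the bounds appears directly, whereas you first establish the ordinary frame inequality on all of $\mathcal{K}$ with respect to $\|\cdot\|_J$ and then restrict to $M_{\sigma_\pm}$ and rescale.
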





\begin{proof}
Given $\{z_n\}^\infty_{n= 1}$ and $\{z'_n\}^\infty_{n= 1}$ are $J$-weaving frames, which impiles that for every $ \sigma \subset{\mathbb{N}}$, 
$\{z_n\}_{n \in \sigma} \cup \{z'_n\}_{n \in \sigma^c}$ is a $J$-frame for $\mathcal{K}$.
Thus $R(T_{\sigma}P^+) = M_{\sigma_+}$ is a maximal uniformly $J$-positive subspace for $\mathcal{K}$. Therefore, $T_{\sigma}P^+: \ell^2(\mathbb N)\rightarrow (M_{\sigma_+}, [., .])$ is an onto map. Hence for every subset $\sigma_+ \subset \mathbb{N},  \{z_n\}_{n \in \sigma_+} \cup \{z'_n\}_{n \in \sigma_+^c}$ is a frame for ($M_{\sigma_+}, [., .]$). In particular, there exist constants $0<\alpha_+ \leq \beta_{+}$ such that equation \ref{19} holds for $M_{\sigma_+}$. A similar assertion applies
for every $\sigma_- \subset \mathbb{N}.$
\end{proof}

The following result establishes a characterization theorem for $J$-weaving frames. This characterization provides deeper insights into their structural properties within Krein spaces.

\begin{thm}\label{2.20}
	Let $\{z_n\}^\infty_{n= 1}$ and $\{z'_n\}^\infty_{n= 1}$ be weaving frames for $\mathcal{K}, [., .]$. Define the index sets $\sigma_+ =\{n \in\mathbb{N} : [z_n, z_n]\geq0\}\quad and\quad \sigma_- =\{n \in\mathbb{N} : [z_n, z_n]< 0\}$. Suppose $M_{\sigma_+}  = \overline{span} (\{z_n\}_{n \in \sigma_+} \cup \{z'_n\}_{n \in \sigma^c_+})$ and  $M_{\sigma_-}  = \overline{span} (\{z_n\}_{n \in \sigma_-} \cup \{z'_n\}_{n \in \sigma^c_-})$. Then the following are equivalent:
	\begin{enumerate}
	\item $\{z_n\}^\infty_{n= 1}$ and $\{z'_n\}^\infty_{n= 1}$ are $J$-weaving frames for $\mathcal{K}, [., .]$.
	
\item If $M_{\sigma_+} \cap M^\perp_{\sigma_+} = \{0\}\quad and\quad M_{\sigma_-} \cap M^\perp_{\sigma_-} = \{0\} $, then for every $ \sigma_{\pm} \subset \mathbb{N}$ and for every $k \in M_{\sigma_{\pm}}$, there exist constants $ \beta_{-} \leq  \alpha_- < 0 < \alpha_+ \leq \beta_{+}$ we have,
	\begin{equation}	\alpha_{\pm} [k, k]  \leq \sum\limits_{n \in \sigma_{\pm}} |[k, z_n]|^2 + \sum\limits_{n \in \sigma_{\pm}^c} |[k, z'_n]|^2 \leq \beta_{\pm} [k, k].\end{equation}
	\end{enumerate}
	
\end{thm}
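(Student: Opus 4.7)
The proof will proceed by establishing the two implications separately, leveraging Proposition~\ref{2.18} and the definition of $J$-weaving frames together with standard facts about maximal uniformly $J$-definite subspaces.

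For the forward direction $(1) \Rightarrow (2)$, the inequality is already supplied by Proposition~\ref{2.18}, so the only remaining task is to verify the intersection conditions. Since $\{z_n\}$ and $\{z'_n\}$ are $J$-weaving frames, Definition~\ref{J-weaving} ensures that for every $\sigma \subset \mathbb{N}$ the subspaces $M_{\sigma_\pm}$ coincide with $R(T_\sigma P^\pm)$ and are maximal uniformly $J$-definite. Because a maximal uniformly $J$-positive (resp. $J$-negative) subspace is non-degenerate with respect to $[\cdot,\cdot]$, any $k \in M_{\sigma_+} \cap M_{\sigma_+}^\perp$ must satisfy both $[k,k] \geq c\|k\|_J^2$ and $[k,k]=0$, forcing $k=0$; the analogous reasoning dispatches the negative case.

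For the reverse direction $(2) \Rightarrow (1)$, I will fix an arbitrary $\sigma \subset \mathbb{N}$ and show that $\{z_n\}_{n\in\sigma} \cup \{z'_n\}_{n\in\sigma^c}$ is a $J$-frame. Since $\{z_n\}, \{z'_n\}$ are assumed to be weaving frames, there exist positive constants $\alpha, \beta$ with
\[
\alpha \|k\|_J^2 \leq \sum_{n\in\sigma}|[k,z_n]|^2 + \sum_{n\in\sigma^c}|[k,z'_n]|^2 \leq \beta \|k\|_J^2
\]
for all $k \in \mathcal{K}$. For $k \in M_{\sigma_+}$, chaining this with the upper inequality of (2) yields $[k,k] \geq (\alpha/\beta_+)\|k\|_J^2$, establishing uniform $J$-positivity. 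A symmetric computation using $\beta_- < 0$ and dividing the inequality $\alpha\|k\|_J^2 \leq \beta_-[k,k]$ by $\beta_-$ (reversing the inequality) gives $[k,k] \leq -(\alpha/|\beta_-|)\|k\|_J^2$ for $k \in M_{\sigma_-}$, hence uniform $J$-negativity.

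The main step will be to upgrade uniform $J$-definiteness to maximality. Because every $z_n$ for $n\in\sigma$ and every $z'_n$ for $n\in\sigma^c$ lies in $M_{\sigma_+} \cup M_{\sigma_-}$, and the woven sequence spans $\mathcal{K}$ (as it is an ordinary frame), one obtains $M_{\sigma_+} + M_{\sigma_-} = \mathcal{K}$. Coupled with the non-degeneracy supplied by the intersection hypotheses and the uniform $J$-definiteness just derived, the standard Krein-space criterion (cf. \cite[Corollary 1.5.2]{An79}) forces $M_{\sigma_+}$ to be maximal uniformly $J$-positive and $M_{\sigma_-}$ maximal uniformly $J$-negative. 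Combined with Remark~\ref{rem}, the inclusions $\overline{\text{span}}\{z_n\}_{n \in \sigma_+} \subset R(T_\sigma P^+) \subset M_{\sigma_+}$ then collapse to equalities, so $R(T_\sigma P^\pm) = M_{\sigma_\pm}$ and the woven sequence meets Definition~\ref{Bessel}. Since $\sigma$ was arbitrary, $J$-wovenness follows. The delicate point I expect to be the main obstacle is precisely this last passage — rigorously converting the algebraic decomposition $\mathcal{K}=M_{\sigma_+}+M_{\sigma_-}$ together with non-degeneracy into true maximality of each summand, and confirming that the Remark~\ref{rem} inclusions become equalities under these hypotheses.
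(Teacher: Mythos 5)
Your proposal is correct and follows the same overall skeleton as the paper's proof: $(1)\Rightarrow(2)$ is dispatched by Proposition~\ref{2.18} in both cases (your additional verification that the intersection conditions actually hold under $(1)$ is harmless but logically unnecessary, since in statement $(2)$ those conditions sit as the hypothesis of a conditional), and $(2)\Rightarrow(1)$ reduces in both cases to showing that $M_{\sigma_+}$ and $M_{\sigma_-}$ are maximal uniformly $J$-positive and $J$-negative, respectively. The difference is in how that reduction is carried out. The paper treats it as a black box: non-degeneracy of $M_{\sigma_\pm}$ plus the two-sided inequality is fed into Proposition 3.8 and Theorem 3.9 of \cite{Gi12}, which directly return maximal uniform definiteness. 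You instead unpack the first half of that citation: chaining the universal weaving bound $\alpha\|k\|_J^2 \leq \sum_{n\in\sigma_+}|[k,z_n]|^2+\sum_{n\in\sigma_+^c}|[k,z'_n]|^2 \leq \beta_+[k,k]$ gives the explicit uniform positivity estimate $[k,k]\geq(\alpha/\beta_+)\|k\|_J^2$ on $M_{\sigma_+}$ (and the mirrored estimate on $M_{\sigma_-}$), which is cleaner and more self-contained than the paper's appeal to \cite[Proposition 3.8]{Gi12}. For the remaining upgrade from uniform definiteness plus $M_{\sigma_+}+M_{\sigma_-}=\mathcal{K}$ to maximality, and for the collapse of the inclusions $\overline{\mathrm{span}}\{z_n\}_{n\in\sigma_+}\subset R(T_\sigma P^+)\subset M_{\sigma_+}$ to equalities, you lean on \cite[Corollary 1.5.2]{An79} where the paper leans on \cite[Theorem 3.9]{Gi12}; both are external citations of comparable weight, and you are candid that this is the delicate step. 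Net effect: your route proves slightly more by hand, makes the constants in the uniform definiteness explicit, and is at least as rigorous as the published argument about why $R(T_\sigma P^\pm)$ is closed, a point the paper passes over silently.
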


\begin{proof}
	 \noindent  (\underline{1 $\implies$ 2})\\
	 Suppose $\{z_n\}^\infty_{n= 1}$ and $\{z'_n\}^\infty_{n= 1}$ are $J$-weaving frames for $\mathcal({K}, [., .]$), then using the definition of $J$-weaving frames and applying Proposition \ref{2.18}, we achieve our desired result.
	
	\noindent  (\underline{2 $\implies$ 1})\\
	If  $M_{\sigma_+} \cap M^\perp_{\sigma_+} = \{0\}$ and $M_{\sigma_-} \cap M^\perp_{\sigma_-} = \{0\}$, then both $M_{\sigma_+}$ and $M_{\sigma_-}$ are $J$-non-degenerated. Then applying (Proposition 3.8. \cite{Gi12}),  for every $ \sigma_{\pm} \subset \mathbb{N}$ and for every $k \in M_{\sigma_{\pm}}$, there exist constants $ \beta_{-} \leq  \alpha_- < 0 < \alpha_+ \leq \beta_{+}$ so that 
	$$	\alpha_{\pm} [k, k]  \leq \sum\limits_{n \in \sigma_{\pm}} |[k, z_n]|^2 + \sum\limits_{n \in \sigma_{\pm}^c} |[k, z'_n]|^2 \leq \beta_{\pm} [k, k].$$
	Furthermore, applying (Theorem 3.9. \cite{Gi12}) $M_{\sigma_+}$ is a maximal uniformly $J$-positive subspace of $\mathcal{K}$ and  $M_{\sigma_-}$ is a maximal uniformly $J$-negative subspace of $\mathcal{K}.$ Therefore, $\{z_n\}^\infty_{n= 1}$ and $\{z'_n\}^\infty_{n= 1}$ are $J$-weaving frames for $\mathcal{K}, [., .]$.
\end{proof}

 \section{Application to Probabilistic Erasure}
 
 In this section, we explore the applications of Krein space weaving frames in the context of probabilistic erasure, focusing on their stability in data recovery scenarios. Our discussion begins by introducing the fundamental concepts of the tensor product of elements in Krein spaces, which plays a crucial role in understanding the interaction of frame elements. This foundational discussion sets the stage for analyzing how weaving frames can be applied to mitigate the effects of erasure in probabilistic settings.
 
Let $\{z_n\}_{n=1}^\infty$ and $\{z'_n\}_{n=1}^\infty$ be weaving frames for $\mathcal K$ with the corresponding frame operators $S_1, S_2$, then for every $\sigma \subset \mathbb N$, the associated error operator is given by,
\begin{equation*}
E=\sum\limits_{n \in \sigma} \delta_n z_n \otimes S_1^{-1} z_n + \sum\limits_{n \in \sigma^c} \delta_n z'_n \otimes S_2^{-1} z'_n,
\end{equation*}
where $\delta_n$ is the standard dirac delta function. 
 
This error operator enables rapid representation while significantly minimizing computational cost. Its effectiveness is largely dependent on the selection of Krein space frames utilized in the encoding and decoding process. By optimizing these frame choices, the overall efficiency of the technique can be further enhanced.

In the following result, we demonstrate how weaving frames can effectively mitigate data loss in erasure scenarios.

\begin{thm}
Let $\{z_n\}_{n=1}^m$ and $\{z'_n\}_{n=1}^m$ be uniform tight weaving frames for the Krein space $\mathbb R^n$ with $\|z_n\|_{J} = \sqrt n = \|z'_n\|_{J}$. Suppose $\epsilon>0$ is very small number for which $\epsilon^2 \geq \frac{n}{m} \text{log} (n)$. Then for every $k \in \mathbb R^n$, there is an absolute constant $M>0$ for which we have, $$E\|\hat k-k\|_{J} \leq M \epsilon \|k\|_{J}.$$
\end{thm}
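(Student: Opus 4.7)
The plan is to reduce the claim to a standard matrix concentration estimate carried out on the associated Hilbert space $(\mathbb R^n,[\cdot,\cdot]_J)$. Theorem \ref{char} tells us that a pair of frames is weaving for the Krein space exactly when it is weaving for $(\mathbb R^n,[\cdot,\cdot]_J)$, so all estimates may be performed there, and the $J$-norm appearing in the conclusion is just the Hilbert norm of this auxiliary inner product space.

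First, I would translate the uniform tight hypothesis together with $\|z_n\|_J=\|z'_n\|_J=\sqrt{n}$ into a description of the frame operators. A trace computation gives $S_1=S_2=m\,I$, so $S_1^{-1}z_n=z_n/m$ and $S_2^{-1}z'_n=z'_n/m$, and the reconstruction reads $k=\tfrac{1}{m}\sum_{n\in\sigma}[k,z_n]z_n+\tfrac{1}{m}\sum_{n\in\sigma^c}[k,z'_n]z'_n$. Modelling probabilistic erasure by independent Bernoulli variables $\delta_n$ with $\mathbb P(\delta_n=1)=\epsilon^2$, the erased reconstruction $\hat k$ satisfies $k-\hat k=E(k)$ for the error operator $E$ defined above, so it suffices to prove $\mathbb E\,\|E\|_{op}\leq M\epsilon$.

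Next, I would decompose $E=\sum_n X_n$ as a sum of independent rank-one self-adjoint random operators of the form $X_n=\tfrac{\delta_n}{m}\,z_n\otimes z_n$ on $\sigma$, with the analogous term $\tfrac{\delta_n}{m}\,z'_n\otimes z'_n$ on $\sigma^c$. A direct computation yields $\|X_n\|_{op}\leq n/m$, while uniform tightness gives $\bigl\|\sum_n\mathbb E[X_n^2]\bigr\|_{op}\leq\epsilon^2\,n/m$ and $\mathbb E[E]=\epsilon^2 I$. The matrix Bernstein inequality (equivalently, Rudelson's noncommutative Khintchine bound) then delivers
\[
\mathbb E\,\|E-\epsilon^2 I\|_{op}\;\leq\;C\Bigl(\sqrt{\tfrac{\epsilon^2 n\log n}{m}}+\tfrac{n\log n}{m}\Bigr).
\]
Under the hypothesis $\epsilon^2\geq\tfrac{n}{m}\log n$, each summand on the right is $\leq C\epsilon^2\leq C\epsilon$ (since $\epsilon$ is small), so $\mathbb E\,\|E\|_{op}\leq M\epsilon$. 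Combining with $\|k-\hat k\|_J=\|E(k)\|_J\leq\|E\|_{op}\|k\|_J$ completes the proof.

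The principal obstacle will lie in step two: selecting the version of matrix concentration whose variance parameter and uniform bound align precisely with the assumption $\epsilon^2\geq(n/m)\log n$. A subtle point is that each choice of $\sigma$ gives a different partition of the summands, but because the bounds on $\|X_n\|_{op}$ and on $\bigl\|\sum_n\mathbb E[X_n^2]\bigr\|_{op}$ only depend on the common values $\|z_n\|_J=\|z'_n\|_J=\sqrt n$ and on the uniform tight frame bound $m$, the resulting estimate is uniform in $\sigma$, which is what ultimately allows a single constant $M$ to appear in the conclusion.
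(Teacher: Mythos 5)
Your argument is essentially correct, but it follows a genuinely different route from the paper, starting with the probabilistic model itself. The theorem statement never defines $\hat k$, and you and the paper fill that gap differently: the paper takes $\hat k=\frac{2}{m}\bigl[\sum_{n\in\sigma}\tilde\delta_n^{p_n}[k,z_n]z_n+\sum_{n\in\sigma^c}\tilde\delta_n^{p_n}[k,z'_n]z'_n\bigr]$ with fair coin flips $\tilde\delta_n^{p_n}$, so that the error $\hat k-k=\frac{1}{m}\sum\epsilon_n[k,\cdot]\cdot$ is driven by symmetric $\pm1$ Bernoulli signs $\epsilon_n=2\tilde\delta_n^{p_n}-1$ and $\epsilon$ enters only as a deterministic upper bound for $\sqrt{(n/m)\log n}$; you instead let $\epsilon^2$ be the erasure probability of independent indicators $\delta_n$, so your error operator has nonzero mean $\epsilon^2 I$ that must be subtracted before concentrating. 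Correspondingly, the paper applies Rudelson's lemma once, getting $\mathbb E\|\sum\epsilon_n z_n\otimes z_n+\cdots\|\le M\sqrt{n\log n}\,\|\sum z_n\otimes z_n+\cdots\|^{1/2}=M\sqrt{n\log n}\cdot\sqrt m$, divides by $m$, and invokes $\epsilon^2\ge(n/m)\log n$, whereas you use matrix Bernstein with variance parameter $\epsilon^2 n/m$ and uniform bound $n/m$, then absorb both the fluctuation $C\epsilon^2$ and the mean $\epsilon^2$ into $M\epsilon$ using $\epsilon\le 1$. Your reduction to $(\mathbb R^n,[\cdot,\cdot]_J)$ via Theorem \ref{char}, the trace computation $S_1=S_2=mI$, and the identity $\frac{1}{m}\bigl[\sum_{n\in\sigma}z_n\otimes z_n+\sum_{n\in\sigma^c}z'_n\otimes z'_n\bigr]=I$ all match what the paper uses (and your observation that the bounds are uniform in $\sigma$ is a point the paper leaves implicit). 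What your version buys is a model in which $\epsilon$ has genuine probabilistic meaning as an erasure rate, at the cost of proving a statement about a different random reconstruction $\hat k$ than the one the paper analyzes; if you want to match the paper exactly, replace your indicators by the symmetric signs and apply the Khintchine-type bound of Rudelson directly, which avoids the mean-subtraction step entirely.
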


\begin{proof}
For every $k \in \mathbb R^n$ and every $\sigma \subset \{1, 2, \cdots, m\}$ we have,
\begin{equation*}
\tilde{E} k = \hat k-k = \frac{1}{m} \left [\sum\limits_{n \in \sigma} \epsilon_n [k, z_n]z_n + \sum\limits_{n \in \sigma^c} \epsilon_n [k, z'_n]z'_n \right],
\end{equation*}
where 
\begin{align*}
\hat k & = \frac{2}{m}  \left [\sum\limits_{n \in \sigma} [k, z_n]z_n + \sum\limits_{n \in \sigma^c} [k, z'_n]z'_n \right]\\ 
& =\frac{2}{m}  \left [\sum\limits_{n \in \sigma} \tilde{\delta_n}^{p_n} [k, z_n]z_n + \sum\limits_{n \in \sigma^c} \tilde{\delta_n}^{p_n} [k, z'_n]z'_n \right],
\end{align*}
 with
$$
\tilde{E}=\sum\limits_{n \in \sigma} \tilde{\delta_n}^{p_n} z_n \otimes z_n + \sum\limits_{n \in \sigma^c} \tilde{\delta_n}^{p_n} z'_n \otimes z'_n 
~\mbox{ and }~
\epsilon_n = 2\tilde{\delta_n}^{p_n} - 1
$$ is an independent Bernoulli variable that takes values $1$ and $-1$, each with probability of $\frac{1}{2}$ (see \cite{Pa99}).

Thus it is sufficient to prove that, for every $\sigma \subset \{1, 2, \cdots, m\}$,
\begin{equation*}
E\left \|  \frac{1}{m} \left [\sum\limits_{n \in \sigma} \epsilon_n z_n \otimes z_n + \sum\limits_{n \in \sigma^c} \epsilon_n  z'_n \otimes z'_n \right] \right \|_{J} \leq M \epsilon.
\end{equation*}
Applying [Lemma, \cite{Ru99}] for every $\sigma \subset \{1, 2, \cdots, m\}$, there is a positive constant $M$ we have 

\begin{align}\label{app1}
E\left \| \left [\sum\limits_{n \in \sigma} \epsilon_n z_n \otimes z_n + \sum\limits_{n \in \sigma^c} \epsilon_n  z'_n \otimes z'_n \right] \right \|_{J}
& \leq  M \sqrt{n \text{log}(n)}\\
\nonumber & \hspace{0.3cm} \left \| \left [\sum\limits_{n \in \sigma} z_n \otimes z_n + \sum\limits_{n \in \sigma^c}  z'_n \otimes z'_n \right] \right \|^{\frac{1}{2}}_{J}.
\end{align}

Furthermore, since $\{z_n\}_{n=1}^m$ and $\{z'_n\}_{n=1}^m$ are uniform tight weaving frames for   $\mathbb R^n$ with $\|z_n\|_{J} = \sqrt n = \|z'_n\|_{J}$, then for every $\sigma \subset \{1, 2, \cdots, m\}$ we have,
\begin{equation*}
\mathcal I_{\mathbb R^n} =  \frac{1}{m} \left [\sum\limits_{n \in \sigma}  z_n \otimes z_n + \sum\limits_{n \in \sigma^c}  z'_n \otimes z'_n \right], 
\end{equation*}
where $\mathcal I_{\mathbb R^n}$ is the identity operator on  $\mathbb R^n$.

Hence we obtain,
\begin{equation}\label{app2}
\left \| \left [\sum\limits_{n \in \sigma} z_n \otimes z_n + \sum\limits_{n \in \sigma^c}  z'_n \otimes z'_n \right] \right \|_{J} =m.
\end{equation}
Thus using the equations (\ref{app1}) and (\ref{app2}) we obtain,
\begin{equation*}
E\left \|  \frac{1}{m} \left [\sum\limits_{n \in \sigma} \epsilon_n z_n \otimes z_n + \sum\limits_{n \in \sigma^c} \epsilon_n  z'_n \otimes z'_n \right] \right \|_{J} \leq M \sqrt{\frac{n}{m}} ~\sqrt {\text{log}(n)}.
\end{equation*}
Therefore, we achieved our result due to the fact $\epsilon^2 \geq \frac{n}{m} \text{log} (n)$.
\end{proof}

\section*{Acknowledgment}
The authors acknowledge the Department of Mathematics at SRM University, {\it AP} - Andhra Pradesh, for providing an enriching academic environment to carry out the research. The second author expresses his gratitude to his advisors, Prof. Saikat Mukherjee (NIT Meghalaya, India) and Prof. Jaydeb Sarkar (ISI Bangalore, India), for their valuable suggestions regarding his research.



\end{document}